\newcommand{\ac}{\circlearrowright}
\theoremstyle{plain}
\newtheorem{coro}{Corollary}
\newtheorem{prop}{Proposition}
\newtheorem{lem}{Lemma}
\newtheorem{dfn}{Definition}
\newtheorem{nott}{Notation}
\theoremstyle{remark}
\begin{document}

\title{On the entropy for group actions on the circle}
\author{Eduardo Jorquera}

\maketitle

\vspace{-1cm}

\begin{abstract}
We show that for a finitely generated group of $C^2$ circle diffeomorphisms,
the entropy of the action equals the entropy of the restriction of the
action to the non-wandering set.
\end{abstract}

\section{Introduction}

Let $(X,dist)$ be a compact metric space and $G$ a group of homeomorphisms of $X$
generated by a finite family of elements $\Gamma \!=\! \{g_1, \ldots ,g_n \}$.
To simplify, we will always assume that $\Gamma$ is symmetric, that is,
$g^{-1} \!\in\! \Gamma$ for every $g \! \in \! \Gamma$. For each $n \in \mathbb{N}$
we denote by $B_{\Gamma}(n)$ the ball of radius $n$ in $G$ (w.r.t. $\Gamma$),
that is, the set of elements $f \!\in\! G$ which may be written in the form
$f \!=\! g_{i_m} \cdots g_{i_1}$ for some $m \!\leq\! n$ and
$g_{i_j} \!\in\! \Gamma$. For $g \in G$ we let $\|f \| = \| f \|_{\Gamma} 
= min\{n \!: f \in B_{\Gamma}(n) \}$

As in the classical case, given $\varepsilon > 0$ and $n \!\in\! \mathbb{N}$, two
points $x,y$ in $X$ are said to be $(n,\varepsilon)$-separated if there exists
$g \!\in\! B_{\Gamma}(n)$ such that $dist (g(x),g(y)) \geq \varepsilon$. A subset
$A \subset X$ is $(n,\varepsilon)$-separated if all $x \neq y$ in $A$ are
$(n,\varepsilon)$-separated. We denote by $s(n,\varepsilon)$
the maximal possible cardinality (perhaps infinite)
of a $(n,\varepsilon)$-separated set. 
The topological entropy for the action at the scale $\varepsilon$ is defined by
$$h_{\Gamma}(G \ac X,\varepsilon) = \limsup_{n \uparrow \infty}
\frac{\log \big( s(n,\varepsilon) \big)}{n},$$
and the topological entropy is defined by
$$h_{\Gamma}(G \ac X) =
\lim_{\varepsilon\downarrow 0} h_{\Gamma}(G \ac X,\varepsilon).$$
Notice that, although $h_{\Gamma} (G \ac X, \varepsilon)$ depends on the
system of generators, the properties of having zero, positive, or infinite
entropy, are independent of this choice.

The definition above was proposed in \cite{GLW} as an extention of the classical
topological entropy of single maps (the definition extends to pseudo-groups of
homeomorphisms, and hence is suitable for applications in Foliation Theory).
Indeed, for a homeomorphism $f$, the topological entropy of the action of
\hspace{0.1cm} $\mathbb{Z} \!\sim\! \langle f \rangle$ \hspace{0.1cm} equals
two times the (classical) topological entropy of $f$. Nevertheless, the
functorial properties of this notion remain unclear. For example,
the following fundamental question is open.

\vspace{0.5cm}

\noindent{\bf General Question.} Is it true that $h_{\Gamma}(G \ac X)$
is equal to $h_{\Gamma}(G \ac \Omega)$~?\\

\noindent Here $\Omega \, =\, \Omega \,(G \ac X)$ denotes the {\em non-wandering} set of
the action, or in other words

$
\Omega \ = \{
\begin{array}{lc}
x \in X \!: \mbox{ for every neighborhood } U \mbox{ of } x,
\mbox{ we have } & \\
 f(U) \cap U \neq \emptyset, \mbox{  for some } f\neq id \mbox{ in } G. &
\end{array} \hspace{-0.4cm} \}
$

\noindent This is a closed invariant set whose complement $\Omega^c$
corresponds to the {\em wandering set} of the action.

The notion of topological entropy for group actions is quite appropriate in
the case where $X$ is a one--dimensional manifold. In fact, in this case, the
topological entropy is necessarily finite ({\em cf.} \S \ref{background}).
Moreover, in the case of actions by diffeomorphisms,
the dichotomy $h_{top} = 0$ or $h_{top} > 0$ is well understood.
Indeed, according to a result
originally proved by Ghys, Langevin, and Walczak, for groups of $C^2$
diffeomorphisms \cite{GLW}, and extended by Hurder to groups of $C^1$
diffeomorphisms (see for instance \cite{wal}), we have
$h_{top} > 0$ if and only if there exists a resilient orbit for the
action. This means that there exist a group element $f$ contracting
by one side to a fixed point $x_0$, and another element $g$ which sends
$x_0$ into its basin of contraction by $f$.

The results of this work give a positive answer to the General Question above
in the context of group actions on one--dimensional manifolds under certain
mild assumptions.

\vspace{0.5cm}

\noindent{\bf Theorem A.} {\em If $G$ is a finitely generated subgroup
of $\mathrm{Diff}_{+}^{2}(\mathrm{S}^{1})$, then for every finite
system of generators $\Gamma$ of $G$, we have}
$$h_{\Gamma}\,(G \ac S^1) = \, h_{\Gamma}\,(G \ac \Omega) \,.$$

\vspace{0.35cm}

Our proof for Theorem A actually works in the Denjoy class $C^{1 + bv}$, and applies to general
codimension-one foliations on compact manifolds. In the class $C^{1 + Lip}$, it is quite possible that
we could give an alternative proof using standard techniques from Level Theory \cite{CC,hector}.

It is unclear whether Theorem A extends to actions of lower regularity. However, it still holds
under certain algebraic hypotheses. In fact, (quite unexpectedly) 
the regularity hypothesis is used to rule out the existence of elements $f \!\in\! G$
that fix some connected component of the wandering set and which are {\em distorted}: that is,
those elements which satisfy
$$\lim_{n \to \infty} \frac{\| f^n \|}{n} = 0.$$
Actually, for the equality between the entropies it suffices
to require that no elememt in $G$ be {\em sub-exponentially distorted}.
In other words, it suffices to require that, for each element
$f \!\in\! G$ with infinite order, there exist a non--decreasing function
$q \!: \mathbb{N} \rightarrow \mathbb{N}$ (depending on $f$) with
sub--exponential growth satisfying  \,$q (\| f^n \|) \geq n , \,$
for every $n \!\in\! \mathbb{N}$. This is an algebraic
condition which is satisfied by many groups, as for example nilpotent or
free groups. (We refer the reader to \cite{cal} for a nice discussion on distorted
elements.) Under this hypothesis, the following result holds.

\vspace{0.6cm}

\noindent{\bf Theorem B.} {\em If $G$ is a finitely generated subgroup
of $\mathrm{Homeo}_{+}(\mathrm{S}^{1})$ without sub--exponentially distorted elements,
then for every finite system of generators $\Gamma$ of $G$, we have}
$$h_{\Gamma}(G \ac S^1) = h_{\Gamma}\,(G \ac \Omega) \,.$$

\vspace{0.4cm}

The entropy of general group actions and distorted elements seem to be related in an
interesting manner. Indeed, though the topological entropy of a single homeomorphism $f$ may
be equal to zero, if this map appears as a sub--exponentially distorted element inside
an acting group, then this map may create positive entropy for the group action.


\section{Some background}
\label{background}

In this work we will consider the normalized length on the circle, and every homeomorphism
will be orientation preserving.

We begin by noticing that if $G$ is a finitely generated group of
circle homeomorphisms and $\Gamma$ is a finite generating system for $G$, then
for all $n \in \mathbb{N}$ and all $\varepsilon > 0$ one has 
\begin{equation}
s(n,\varepsilon) \leq \frac{1}{\varepsilon} \# B_{\Gamma}(n).
\label{no}
\end{equation}
Indeed, let $A$ be a $(n,\varepsilon)$-separated set of cardinality 
$s(n,\varepsilon)$. Then for every two adjacent points $x,y$ in $A$ there 
exists $f \in B_{\Gamma}(n)$ such that $dist(f(x),f(y)) \geq \varepsilon$. 
For a fixed $f$, the intervals $[f(x),f(y)]$ which appear have disjoint 
interior. Since the total length of the circle is 1, any given $f$ can 
be used in this construction at most $1/\varepsilon$ times, which 
immediately gives (\ref{no}).

Notice that, taking the logarithm at both sides of (\ref{no}), 
dividing by $n$, and passing to the limits, this gives
$$h_{\Gamma}(G \ac S^1) \leq gr_{\Gamma}(G),$$
where $gr_{\Gamma}(G)$ denotes the {\em growth} of $G$ with respect to $\Gamma$,
that is,
$$gr_{\Gamma}(G) = \lim_{n \rightarrow \infty} \frac{\log(\# B_{\Gamma}(n))}{n}.$$
Some easy consequences of this fact are the following ones:\\

\vspace{0.01cm}

\noindent -- If $G$ has sub-exponential growth, that is, if $gr_{\Gamma}(G) \!=\! 0$ (in particular,
if $G$ is nilpotent, or if $G$ is the Grigorchuk-Maki's group considered in \cite{growth}),
then $h_{\Gamma}(G \ac S^1) = 0$ for all finite generating systems $\Gamma$.\\

\vspace{0.01cm}

\noindent -- In the general case, if \,$\# \Gamma = q \geq 1$,\, then from the relations
$$\# B_{\Gamma}(n) \leq 1+ \sum_{j=1}^{n} 2q (2q-1)^{j-1} =
\left \{ \begin{array} {l}
1 + \big( \frac{q}{q-1} \big) \big( (2q-1)^{n}-1 \big), \hspace{0.32cm} \hfill q \geq 2,\\
1 + 2n, \hfill q = 1, \end{array} \right.$$
one concludes that
$$h_{\Gamma}(G \ac S^1) \leq \log (2q-1).$$
This shows in particular that the entropy of the action of $G$ on $S^{1}$ is finite.
Notice that this may be also deduced from the probabilistic arguments of \cite{DKN}
(see Th\'eor\`eme D therein). However, these arguments only yield the weaker estimate
$h_{\Gamma}(G \ac S^1) \leq \log (2q)$ when $\Gamma$ has cardinality $q$.

\vspace{0.2cm}


\section{Some preparation for the proofs}

The statement of our results are obvious when the non-wandering set of the action equals the whole
circle. Hence, we will assume in what follows that $\Omega$ is a proper subset of $\mathrm{S}^1$,
and we will currently denote by $I$ some of the connected components of the complement
of $\Omega$. Let $Est(I)$ denote the stabilizer of $I$ in $G$.

\vspace{0.2cm}

\begin{lem} {\em The stabilizer $Est(I) \,$ is either trivial or infinite cyclic.}
\end{lem}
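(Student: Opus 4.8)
\noindent\emph{Plan of proof.} If $Est(I)$ is trivial there is nothing to prove, so I assume $Est(I)\neq\{\mathrm{id}\}$ and will show it is infinite cyclic. The first observation is that the action of $Est(I)$ on $I$ is \emph{free}: a nontrivial $f\in Est(I)$ fixing some $x\in I$ would satisfy $x\in f(U)\cap U$ for every neighbourhood $U$ of $x$, contradicting $x\notin\Omega$. Since an orientation preserving homeomorphism of an interval with no interior fixed point has displacement of constant sign, after fixing a base point $x_0\in I$ the rule $h_1\prec h_2\iff h_1(x_0)<h_2(x_0)$ defines a total, left-invariant order on $Est(I)$ in which $h$ is positive exactly when $h(x)>x$ for all $x\in I$.

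Next I would check that $\prec$ is Archimedean: if $g\succ\mathrm{id}$ and $g^{n}(x_0)\le h(x_0)$ for every $n$, then the increasing sequence $g^{n}(x_0)$ would converge to a point of $I$ fixed by $g$, which is impossible. By H\"older's theorem, $Est(I)$ is therefore abelian and there is an order preserving injective homomorphism $\rho\colon Est(I)\hookrightarrow(\mathbb{R},+)$. Since a subgroup of $\mathbb{R}$ is trivial, infinite cyclic, or dense, and the first case is excluded, everything reduces to ruling out the dense one.

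So suppose $\rho(Est(I))$ is dense in $\mathbb{R}$, and consider the associated semiconjugacy $\phi\colon I\to\mathbb{R}$, $\phi(x)=\sup\{\rho(h)\colon h(x_0)\le x\}$. The fixed-point argument above shows this supremum is finite and over a non-empty set, so $\phi$ is well defined; it is non-decreasing, satisfies $\phi\circ h=\rho(h)+\phi$ and $\phi(x_0)=0$, and its image contains the dense set $\rho(Est(I))$, so $\phi$ has no jumps and is continuous. As $\rho$ is injective and nontrivial, $\phi$ is non-constant, so, $I$ being connected, the union $W$ of the interiors of the level sets of $\phi$ is a proper open subset of $I$. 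Pick $p\in I\setminus W$; then $p$ is not interior to its own level set, so by continuity $\phi$ is non-constant on a one-sided neighbourhood of $p$, say on $(p,p+t)$ for all small $t>0$ (the other side being symmetric). Given a neighbourhood $U$ of $p$, choose $t>0$ with $(p,p+t)\subset U$ and, using density, choose $h\neq\mathrm{id}$ with $0<\rho(h)<\phi(p+t)-\phi(p)$. Then $\phi(p)<\phi(h(p))<\phi(p+t)$, and monotonicity of $\phi$ forces $p<h(p)<p+t$; since $h(p)\in(p,p+t)\subset U$ while $p\in U$, we get $h(p)\in h(U)\cap U$ with $h\neq\mathrm{id}$. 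This says $p\in\Omega$, contradicting $p\in I\subset\Omega^{c}$; therefore $Est(I)$ is not dense and so is infinite cyclic.

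The main obstacle is precisely this last step. Density of $\rho(Est(I))$ supplies group elements that are arbitrarily small in the order, but because the semiconjugacy $\phi$ may collapse whole subintervals of $I$, such elements need not move every point of $I$ by a small amount; this is why one must run the final argument at a point $p$ lying outside the interiors of the level sets of $\phi$, where $\phi$ is genuinely monotone on one side and order-smallness of $h$ does force $h(p)$ close to $p$. Everything else — freeness, the order, Archimedeanity, and the construction of $\phi$ — is routine.
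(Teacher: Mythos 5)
Your proof is correct and follows essentially the same route as the paper: freeness of the $Est(I)$-action on $I$, H\"older's theorem, and exclusion of the dense case by producing non-wandering points inside $I$. The only difference is one of detail --- you spell out the order, the Archimedean property, and the semiconjugacy $\phi$ explicitly, and you locate the non-wandering point at a point outside the interiors of the level sets of $\phi$ rather than at a point with singleton preimage, which is a slightly more careful (and equally valid) way of finishing.
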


\begin{proof} The (restriction to $I$ of the) nontrivial elements of
$\, Est(I)|_{I}  \,$ have no fixed points, for otherwise these
points would be non-wandering.
Thus $\, Est(I)|_{I} \,$ acts freely on $I$, and according to H\"older
Theorem \cite{ghys,navas},
its action is semiconjugate to an action by translations. We
claim that, if $\, Est(I)|_{I} \,$ is nontrivial, then it is infinite
cyclic. Indeed, if not then the corresponding group of translations is
dense. This implies that the preimage by the semiconjugacy of any point
whose preimage is a single point corresponds to a non-wandering point
for the action. Nevertheless, this contradicts the fact that $I$ is
contained in $\Omega^c$.

If $Est(I)|_{I}$ is trivial then $f|_I$ is trivial for every $f \in Est(I)$,
and hence $f$ itself must be the identity. We then conclude that
$\, Est(I) \,$ is trivial.

Analogously, $\, Est(I) \,$ is cyclic if $\, Est(I)|_{I} \,$ is
cyclic. In this case, $\, Est(I)|_{I} \,$ is generated by the
restriction to the interval
$I$ of the generator of $Est(I)$. \end{proof}

\vspace{0.1cm}

\begin{dfn} A connected component $I$ of $\Omega^c$ will be
called of \textit{type 1} if $\, Est(I) \,$ is trivial, and
will be called of {type 2} if $\, Est(I) \,$ is infinite cyclic.
\end{dfn}

\vspace{0.15cm}

Notice that the families of connected components of type 1 and 2 are invariant, that is, for
each $f \!\in\! G$ the interval $f(I)$ is of type 1 (resp. of type 2) if $I$ is of type 1
(resp. of type 2). Moreover, given two connected components of type 1 of $\Omega^c$,
there exists at most one element in $G$ sending the former into the latter. Indeed,
if $f(I) \!=\! g(I)$ then $g^{-1} f$ is in the stabilizer of $I$, and hence
$f \!=\! g$ if $I$ is of type 1.

\vspace{0.2cm}

\begin{lem}
Let $x_1,\ldots,x_m$ be points contained in a single type 1 connected
component of $\Omega^c$. If for some $\varepsilon \!>\! 0$ the points
$x_i,x_j$ are $(\varepsilon,n)$-separated for every $i \neq j$,
then $m \leq 1 + \frac{1}{\varepsilon}$.
\label{lema0}
\end{lem}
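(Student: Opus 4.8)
The plan is to exploit the fact that $I$ is of type 1, so that distinct elements of $G$ send $I$ to pairwise distinct connected components of $\Omega^c$, and to combine this with the length estimate already used to prove (\ref{no}). Fix a type 1 component $I$ containing $x_1, \ldots, x_m$, and suppose the $x_i$ are pairwise $(n,\varepsilon)$-separated. Order the points so that $x_1 < x_2 < \cdots < x_m$ along $I$ (using the orientation of $I$ inherited from $S^1$). For each adjacent pair $x_i, x_{i+1}$, choose $f_i \in B_\Gamma(n)$ realizing the separation, i.e. $\mathrm{dist}(f_i(x_i), f_i(x_{i+1})) \geq \varepsilon$; so the closed interval $J_i$ bounded by $f_i(x_i)$ and $f_i(x_{i+1})$ has length $\geq \varepsilon$.

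The key observation is that the intervals $J_1, \ldots, J_{m-1}$ have pairwise disjoint interiors. Indeed, $f_i$ maps the subinterval $[x_i, x_{i+1}] \subset I$ onto an interval inside $f_i(I)$, which is a connected component of $\Omega^c$ (invariance of $\Omega^c$), and in fact $J_i \subset \overline{f_i(I)}$. If two of these intervals, say $J_i$ and $J_j$ with $i \neq j$, had overlapping interiors, then the components $f_i(I)$ and $f_j(I)$ would coincide (two connected components of an open set either agree or are disjoint, and an interval of positive length in the common overlap forces them to be the same component). But $f_i(I) = f_j(I)$ means $f_j^{-1} f_i \in Est(I)$, which is trivial since $I$ is of type 1, so $f_i = f_j$; then $J_i$ and $J_j$ are the images under one map of the subintervals $[x_i,x_{i+1}]$ and $[x_j, x_{j+1}]$ of $I$, which have disjoint interiors, hence so do $J_i$ and $J_j$ — a contradiction. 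Therefore the $J_i$ are essentially disjoint.

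Since the total length of $S^1$ is $1$ and each $J_i$ has length at least $\varepsilon$, we get $(m-1)\varepsilon \leq 1$, whence $m \leq 1 + \frac{1}{\varepsilon}$, as claimed.

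The only subtle point — and the one to be careful about — is the disjointness argument: one must rule out \emph{partial} overlap of $J_i$ and $J_j$, not merely equality, and handle the boundary-point case where $J_i$ and $J_j$ might abut. Overlap with nonempty interior forces $f_i(I) = f_j(I)$ as above; sharing only an endpoint is harmless for the length count. Everything else is the same packing argument as in the proof of (\ref{no}), so no real obstacle remains once type-1-ness is used to collapse $f_i = f_j$.
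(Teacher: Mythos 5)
Your argument is correct and is essentially the paper's own proof: order the points, pick $g_i \in B_\Gamma(n)$ separating each adjacent pair, use type-1-ness (trivial stabilizer, hence at most one group element maps $I$ onto any given component of $\Omega^c$) to conclude the image intervals have pairwise disjoint interiors, and pack them into the circle to get $(m-1)\varepsilon \leq 1$. The only difference is that you spell out the disjointness step, which the paper states in one line.
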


\begin{proof}
Let $I \! = ]a,b[$ be the connected component of type 1 of $\Omega^c$
containing the points $x_1,\ldots,x_m$. After renumbering the $x_i$'s,
we may assume that $a < x_1 < x_2 < \ldots < x_m < b$. For each
$1 \leq i \leq m-1$ one can choose an element
$g_i \in B_{\Gamma}(n)$ such that
$dist (g_i(x_i),g_i(x_{i+1})) \geq \varepsilon$. Now, since $I$
is of type 1, the intervals $]g_i (x_i), g_i (x_{i+1})[$ are two
by two disjoint. Therefore, the number of these intervals times
the minimal length among them is less than or equal to 1. This
gives $(m-1) \varepsilon \leq 1$, thus proving the lemma.
\end{proof}

\vspace{0.35cm}

The case of connected components $I$ of type 2 of $\Omega^c$ is much more complicated than
the one of type 1 connected components. The difficulty is related to the fact that, if the
generator of the stabilizer of $I$ is sub-exponentially distorted in $G$, then this would imply
the existence of exponentially many $(n,\varepsilon)$-separated points inside $I$, and hence
a relevant part of the entropy would be ``concentrated'' in $I$. To deal with this problem,
for each connected component $I$ of type 2 of $\Omega^c$ we denote by $p_I$ its middle point,
and then we define $\ell_I \!\!: G \rightarrow \mathbb{N}_0$ as follows. Let $h$ be the
generator of the stabilizer of $I$ such that $h(x)>x$ for all $x$ in $I$. For each
$f \!\in\! G$ the element $fhf^{-1}$ is the generator of the stabilizer of $f(I)$ with
the analogous property. We then let $\ell_I (f)= \lvert r \lvert$, where $r$ is the
unique integer number such that
$$fh^{r}f^{-1}(p_{f (I)}) \leq f(p_I) < fh^{r+1}f^{-1}(p_{f(I)}).$$

\vspace{0.2cm}

\begin{lem}
For all $f,g$ in $G$ one has
$$\ell_I (g \circ f) \leq \ell_{f(I)}(g) + \ell_I (f) + 1.$$
\label{lema1}
\end{lem}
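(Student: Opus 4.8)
The plan is to unwind the definition of $\ell_I$ in terms of the dynamical ordering of iterates of the stabilizer generators, and then track how the ``position'' of the marked point $p_I$ transforms under composition.

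First I would fix notation: write $h$ for the generator of $Est(I)$ with $h>\mathrm{id}$ on $I$, so that $h_{J} := \varphi h \varphi^{-1}$ is the corresponding generator of $Est(\varphi(I))$ for any $\varphi\in G$ sending $I$ to $J$. Set $J = f(I)$ and $K = g(J) = (g\circ f)(I)$. By definition $\ell_I(f) = |r|$ where $r$ is the unique integer with
$$f h^{r} f^{-1}(p_J) \le f(p_I) < f h^{r+1} f^{-1}(p_J),$$
i.e. $h_J^{r}(p_J) \le f(p_I) < h_J^{r+1}(p_J)$; likewise $\ell_J(g) = |s|$ where $h_K^{s}(p_K) \le g(p_J) < h_K^{s+1}(p_K)$, and $\ell_I(g\circ f) = |t|$ where $h_K^{t}(p_K) \le (g\circ f)(p_I) < h_K^{t+1}(p_K)$.

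Next, the key step is a ``cocycle'' computation. Apply $g$ to the inequality defining $r$. Since $g$ conjugates $h_J$ to $h_K$ (because $g h_J g^{-1} = g f h f^{-1} g^{-1} = h_K$) and $g$ is order preserving, we get $h_K^{r}(g(p_J)) \le (g\circ f)(p_I) < h_K^{r+1}(g(p_J))$. Combining with the inequality $h_K^{s}(p_K) \le g(p_J) < h_K^{s+1}(p_K)$ and again using that $h_K$ preserves the order, one sandwiches $(g\circ f)(p_I)$ between $h_K^{r+s}(p_K)$ and $h_K^{r+s+2}(p_K)$. Hence $t \in \{r+s, r+s+1\}$, so $|t| \le |r| + |s| + 1$, which is exactly $\ell_I(g\circ f) \le \ell_J(g) + \ell_I(f) + 1 = \ell_{f(I)}(g) + \ell_I(f) + 1$.

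The main subtlety — the step I would be most careful about — is the bookkeeping with absolute values versus signed integers: $\ell$ records $|r|$, not $r$, so after establishing $t \in \{r+s, r+s+1\}$ one still needs the elementary inequality $|r+s|, |r+s+1| \le |r| + |s| + 1$, which holds but should be stated. A secondary point to verify carefully is that $g$ really does intertwine the two stabilizer generators with the correct orientation (that $h_K(x) > x$ on $K$, matching the normalization used to pick $r$), so that the monotone substitution preserves the chain of inequalities rather than reversing it; this is where the orientation-preserving hypothesis on $G$ is used.
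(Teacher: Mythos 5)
Your proof is correct and follows essentially the same route as the paper: both arguments sandwich $(g\circ f)(p_I)$ between $h_K^{r+s}(p_K)$ and $h_K^{r+s+2}(p_K)$ to conclude that the index of $g\circ f$ is $r+s$ or $r+s+1$, and then apply the triangle inequality (the paper conjugates the $s$-inequality back by $g^{-1}$ and pushes forward at the end, while you push the $r$-inequality forward by $g$, but this is only a cosmetic difference). The two points you flag as subtleties — the passage from signed indices to absolute values, and the orientation normalization of the conjugated generator — are exactly the ones implicitly handled in the paper's argument.
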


\begin{proof} Let $r$ be the unique integer number such that
\begin{equation}
(fhf^{-1})^{r}(p_{f(I)}) \leq f(p_I) < (fhf^{-1})^{r+1}(p_{f(I)}),
\label{defn-r}
\end{equation}
and let $s$ be the unique integer number such that
$$(gfhf^{-1}g^{-1})^{s}(p_{gf(I)}) \leq g(p_{f(I)}) < (gfhf^{-1}g^{-1})^{s+1}(p_{gf(I)}),$$
so that
$$l_I(f)=\lvert r \lvert, \quad l_{f(I)}(g)= \lvert s \lvert.$$
We then have
$$g^{-1}(gfhf^{-1}g^{-1})^{s}(p_{gf(I)}) \leq p_{f(I)} < g^{-1}(gfhf^{-1}g^{-1})^{s+1}(p_{gf(I)}),$$
that is
$$(fhf^{-1})^{s}g^{-1}(p_{gf(I)}) \leq p_{f(I)} < (fhf^{-1})^{s+1}g^{-1}(p_{gf(I)}).$$
Therefore,
$$(fhf^{-1})^{r}(fhf^{-1})^{s}g^{-1}(p_{gf(I)}) \leq f(p_I)
< (fhf^{-1})^{r+1}(fhf^{-1})^{s+1}g^{-1}(p_{gf(I)}),$$
and hence
$$(fhf^{-1})^{r+s}g^{-1}(p_{gf(I)}) \leq f(p_I) < (fhf^{-1})^{r+s+2}g^{-1}(p_{gf(I)}).$$
This easily gives
$$g(fhf^{-1})^{r+s}g^{-1}(p_{gf(I)}) \leq gf(p_I) < g(fhf^{-1})^{r+s+2}g^{-1}(p_{gf(I)}),$$
and thus
$$(gfhf^{-1}g^{-1})^{r+s}(p_{gf(I)}) \leq gf(p_I) < (gfhf^{-1}g^{-1})^{r+s+2}(p_{gf(I)}).$$
This shows that $l_I(gf)$ equals either $\lvert r+s \lvert$ or
$\lvert r+s+1 \lvert$, which concludes the proof. \end{proof}

\vspace{0.35cm}

The following corollary is a direct consequence of 
the preceding lemma, but may be proved independently.

\vspace{0.1cm}

\begin{coro}
For every $f \!\in\! G$ one has
$$|\ell_I (f) - \ell_{f(I)} (f^{-1})| \leq 1.$$
\label{corolario}
\end{coro}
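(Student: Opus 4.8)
The plan is to prove the corollary by a direct computation: I will unwind both $\ell_I(f)$ and $\ell_{f(I)}(f^{-1})$ and compare the two resulting chains of inequalities inside the interval $f(I)$. Write $J=f(I)$ and let $k=fhf^{-1}$ be the generator of $Est(J)$ with $k(x)>x$ on $J$. By definition, $\ell_I(f)=|r|$, where $r$ is the unique integer with $k^{r}(p_{J})\le f(p_I)<k^{r+1}(p_{J})$. To unwind $\ell_{f(I)}(f^{-1})$, note that $f^{-1}(J)=I$ and that the generator of $Est(I)$ obtained from $k$ by conjugation by $f^{-1}$ is precisely $f^{-1}kf=h$; hence $\ell_{J}(f^{-1})=|r'|$, where $r'$ is the unique integer with $h^{r'}(p_I)\le f^{-1}(p_{J})<h^{r'+1}(p_I)$. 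Conjugating this last chain by $f$ and putting $a=p_{J}$, $b=f(p_I)$, the two definitions become
\[k^{r}(a)\le b<k^{r+1}(a),\qquad k^{r'}(b)\le a<k^{r'+1}(b).\]

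The key observation is that $k$ restricted to $J$ is an orientation-preserving, fixed-point-free homeomorphism of the interval $J$ with $k(x)>x$, so that $n\mapsto k^{n}(x)$ is strictly increasing for every $x\in J$, and $a,b$ both lie in $J$. Applying $k^{r'}$ to $k^{r}(a)\le b$ and then using $k^{r'}(b)\le a$ gives $k^{r+r'}(a)\le a$, hence $r+r'\le 0$; applying $k^{r'+1}$ to $b<k^{r+1}(a)$ and then using $a<k^{r'+1}(b)$ gives $a<k^{r+r'+2}(a)$, hence $r+r'\ge -1$. Thus $r+r'\in\{-1,0\}$, and since $\ell_I(f)=|r|$ and $\ell_{f(I)}(f^{-1})=|r'|$, the reverse triangle inequality $\bigl||r|-|r'|\bigr|\le |r+r'|\le 1$ finishes the proof.

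I do not expect a genuine obstacle here; the only delicate points are the bookkeeping of the conjugations in the first step and the careful tracking of strict versus non-strict inequalities in the second. It is worth noting that the relation $r+r'\in\{-1,0\}$ is exactly what the proof of Lemma \ref{lema1} yields upon specializing $g=f^{-1}$ there (so that $\ell_I(g\circ f)=\ell_I(\mathrm{id})=0$), which is why the corollary may equally be read off from that proof.
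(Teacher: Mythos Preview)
Your argument is correct and follows essentially the same route as the paper: both proofs unwind the defining inequality for $r$, use that the generator acts without fixed points and increasingly on the interval, and conclude that $r+r'\in\{-1,0\}$ (the paper phrases this as $r'\in\{-r,-r-1\}$, obtained by pulling the chain back to $I$ rather than pushing both chains forward to $J$ as you do). Your final remark about specializing Lemma~\ref{lema1} with $g=f^{-1}$ also matches the paper's observation that the corollary can be read off from that lemma.
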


\begin{proof} From (\ref{defn-r}) one obtains
$$h^{-(r+1)}(p_{I}) < f^{-1}(p_{f(I)}) \leq h^{-r}(p_{I}) < h^{-r+1}(p_I),$$
and hence $\ell_{f(I)} (f^{-1})$ equals either $|r|$ or 
$|r+1|$. Since $\ell_I (f) = |r|$, the corollary follows.
\end{proof}

\vspace{0.2cm}


\section{The proof in the smooth case}

To rule out the possibility of ``concentration'' of the entropy on a type 2 connected
component $I$ of $\Omega^c$, in the $C^2$ case we will use classical control of distortion
arguments in order to construct, starting from the function $\ell_I$, a kind of
quasi-morphism from $G$ into $\mathbb{N}_0$. Slightly more generally, let $\mathcal{F}$
be any finite family of connected components of type 2 of $\Omega^c$. We denote by
$\mathcal{F}^{G}$ the family formed by all the intervals contained in the orbits
of the intervals in $\mathcal{F}$. For each $f \in G$ we then define
$$\ell_{\mathcal{F}} (f) = \sup_{I \in \mathcal{F}^{G}} \ell_{I} (f).$$
{\em A priori}, the value of $\ell_{\mathcal{F}}$ could be infinite.
We claim however that, for groups of $C^2$ diffeomorphisms, its
value is necessarily finite for every element $f$.

\vspace{0.1cm}

\begin{prop} For every finite family $\mathcal{F}$ of type 2 connected components
of $\Omega^c$, the value of $\ell_{\mathcal{F}}(f)$ is finite for each $f \in G$.
\label{finito}
\end{prop}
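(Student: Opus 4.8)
The plan is to show that if $\ell_{\mathcal F}(f)$ were infinite for some $f\in G$, then there would be arbitrarily large powers $h_{J}^{N}$ — where $h_J = \varphi h \varphi^{-1}$ is the generator of the stabilizer of $J=\varphi(I)$ for various $I\in\mathcal F$ and $\varphi\in G$ — whose "footprint" near $p_J$ is squeezed inside the action of a single element $f$. More precisely, by definition of $\ell_J$, the point $f(p_I)$ lies between $(fh^r f^{-1})(p_{f(I)})$ and $(fh^{r+1}f^{-1})(p_{f(I)})$ with $|r|=\ell_I(f)$; so if $\ell_{\mathcal F}(f)$ is infinite we can find $I_k\in\mathcal F^G$ with $\ell_{I_k}(f)\to\infty$. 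Replacing $I_k$ by $f^{-1}(I_k)$ and using Corollary~\ref{corolario} to control the bookkeeping, the upshot is that inside the interval $f^{-1}(I_k)\subset\Omega^c$ there are $n_k\to\infty$ disjoint subintervals of the form $h^{j}(J_k)$, $0\le j< n_k$ (for $J_k$ a fixed small interval around $p_{I_k'}$), all mapped by the single diffeomorphism $f$ inside one interval of the $G$-orbit of $\mathcal F$. Since $\mathcal F$ is finite and the intervals in each orbit are pairwise disjoint, the total length available for the images is at most $|\mathcal F|$ times the total circle length, hence bounded. The key point will then be: arbitrarily long forward orbits $\{x, h(x),\ldots,h^{n-1}(x)\}$ of a \emph{single} $C^2$ diffeomorphism $h$ acting freely (without fixed points) on an interval $I$, all lying in $I$ with $h^n(x)\le b$, cannot be compressed by a diffeomorphism $f$ onto a set of bounded total length unless... — and this is exactly what the control-of-distortion argument forbids.

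The concrete tool I would use is the classical Denjoy/Schwartz estimate (the Ghys–Langevin–Walczak distortion lemma): for a $C^{1+\mathrm{bv}}$ diffeomorphism $h$ and a point $x$, if the intervals $x, h(x),\ldots,h^{n}(x)$ have pairwise disjoint interiors (which holds automatically here since $h$ is a free action on $I$, so the orbit is monotone), then the distortion $\sup\big|\log\frac{(h^n)'(y)}{(h^n)'(z)}\big|$ over $y,z$ in a \emph{fixed} compact subinterval $K\Subset I$ is bounded by a constant depending only on $K$, $I$, and the $C^{1+\mathrm{bv}}$-norm of $h$ — crucially \emph{not} on $n$. First I would fix, for each of the finitely many $I\in\mathcal F$, a small closed interval $K_I$ around $p_I$ compactly contained in $I$, and pass this choice around the orbit by the elements of $G$. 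Then, supposing $\ell_{\mathcal F}(f)=\infty$, I extract the sequence $I_k, \varphi_k, n_k$ as above so that $f$ maps $n_k$ disjoint translates $h_{I_k}^{j}(K_{I_k})$ (sitting inside one interval $I_k\in\mathcal F$, after conjugating the whole configuration back by $\varphi_k$) into disjoint subintervals of $f(I_k)$.

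The heart of the argument is the following length count. Inside $I_k$ we have the $n_k$ disjoint intervals $h_{I_k}^{j}(K_{I_k})$, $0\le j<n_k$; each is the image under the single map $h_{I_k}^{j}$ of the fixed interval $K_{I_k}$, and the distortion lemma gives $|h_{I_k}^{j}(K_{I_k})| \ge c_k\, |h_{I_k}^{j}(I_k^\circ)|$ for a constant $c_k>0$ depending only on $K_{I_k}\Subset I_k$ and on $\|h_{I_k}\|_{C^{1+\mathrm{bv}}}$ (not on $j$); summing over $j$ and using that the $h_{I_k}^{j}(I_k^\circ)$ tile a subinterval of $I_k$, we get $\sum_j |h_{I_k}^{j}(K_{I_k})| \ge c_k\, |$ that subinterval $|$. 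Now applying $f$: again by the distortion lemma applied to $f$ restricted to the compact piece $\bigcup_j h_{I_k}^j(K_{I_k})$ of $I_k$, the ratio $|f(h_{I_k}^j(K_{I_k}))| / |f(h_{I_k}^j(I_k^\circ))|$ is comparable to $|h_{I_k}^j(K_{I_k})|/|h_{I_k}^j(I_k^\circ)| \ge c_k$. But the $f(h_{I_k}^j(I_k^\circ))$ are disjoint subintervals of the circle, so $\sum_j |f(h_{I_k}^j(K_{I_k}))| \ge c_k'\sum_j|f(h_{I_k}^j(I_k^\circ))|$... — this is not yet a contradiction by itself, so the right packaging is instead: the $n_k$ intervals $f(h_{I_k}^j(K_{I_k}))$ are disjoint and each has length at least $c_k$ times the length of a corresponding piece, and these pieces together cannot have total length exceeding $|\mathcal F|$; meanwhile the $h_{I_k}^j(K_{I_k})$ themselves must shrink (their lengths sum to at most $|I_k|\le 1$), forcing, via $C^{1+\mathrm{bv}}$ distortion applied to $f$ on a fixed compact set, a uniform lower bound $|f(h_{I_k}^j(K_{I_k}))| \ge \delta_k |h_{I_k}^j(K_{I_k})|$ with $\delta_k>0$ independent of $j$ — and then one runs the count the other way to see that having $n_k\to\infty$ disjoint such images inside a set of bounded length is incompatible with the fact that all $n_k$ preimages live inside the single interval $I_k$ of length $\le 1$ while $f$ has distortion bounded on that compact set. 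In short: the finiteness of $\mathcal F$ caps the image length, the freeness of $h$ on $I$ forces the $n_k$ domain intervals to genuinely tile (not overlap), and the $C^2$ (or $C^{1+\mathrm{bv}}$) hypothesis, through bounded distortion, transfers a positive proportion between domain and image — three facts that cannot all hold for $n_k\to\infty$. The main obstacle, and the step requiring the most care, is the uniformity of all the distortion constants: one must make sure that the constants $c_k, \delta_k$ do not degenerate as $k\to\infty$, which is why it is essential to choose the compact intervals $K_I$ only for the finitely many $I\in\mathcal F$ and carry them around by the group action, rather than choosing them anew for each $\varphi_k(I_k)$ — the finiteness of $\mathcal F$ is doing real work here, and the $C^1$-only case fails precisely because the analogous distortion control is unavailable.
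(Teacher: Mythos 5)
Your plan correctly identifies the general toolbox (Denjoy--Schwartz control of distortion, disjointness of the intervals in a $G$-orbit of a wandering component, finiteness of $\mathcal F$), and you correctly flag the uniformity of the distortion constants over the infinite orbit $\mathcal F^G$ as the main obstacle --- but you never supply the mechanism that resolves it, and that mechanism is the actual content of the paper's proof. The paper's key observation is this: if $g=g_{i_n}\cdots g_{i_1}$ is an element of \emph{minimal word length} sending $I$ onto $J\in\mathcal F^G$, then the intermediate images $I,\,g_{i_1}(I),\,\dots,\,g_{i_{n-1}}\cdots g_{i_1}(I)$ are pairwise disjoint connected components of $\Omega^c$ (otherwise one could shorten the word), whence $var(\log(g'|_I))\le\sum_{h\in\Gamma}var(\log h')=:W$ \emph{uniformly over the whole orbit}. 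Combined with the near-subadditivity of Lemma~\ref{lema1} and Corollary~\ref{corolario}, this reduces $\ell_J(f)$ to $\ell_I(g)+\ell_I(fg)+2$, i.e.\ to two quantities computed on the \emph{fixed} interval $I$ with its \emph{fixed} generator $h$ and with uniformly bounded distortion, which Lemma~\ref{lema-n} then bounds. Your substitute --- fixing compact intervals $K_I\Subset I$ for $I\in\mathcal F$ and ``carrying them around by the group action'' --- does not achieve this: $\varphi(K_I)$ need not occupy a definite proportion of $\varphi(I)$ when $\varphi$ ranges over elements of unbounded word length, precisely because their distortion on $I$ is a priori unbounded; only the minimal-length choice tames it.

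There is a second, independent gap in the endgame. The quantity $\ell_J(f)$ counts \emph{fundamental domains} of $h_J$ between $p_J$ and $f^{-1}(p_{f(J)})$, and fundamental domains accumulate at the endpoints of $J$ with lengths tending to $0$. Consequently a packing/length count (``$n_k$ disjoint images must have total length at most $|\mathcal F|$ times the circle length'') can never bound the \emph{number} of fundamental domains between two points: infinitely many of them fit in arbitrarily small length. Your own sketch stalls at exactly this point (``this is not yet a contradiction by itself''), and the proposed repackaging is still a length count. What is actually needed is the metric statement of Lemma~\ref{lema-n}: bounded variation of $\log g'$ on $I$ forces $g^{-1}(p_{g(I)})$ to lie in the fixed compact subinterval $[a+|I|/2e^{V},\,b-|I|/2e^{V}]$, which meets only a bounded number of $h$-fundamental domains --- a bound depending on $V$ and on the fixed pair $(I,h)$, not on the point of the orbit. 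Without both the minimal-length disjointness trick and this reduction to the fixed interval, the proof does not close.
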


\vspace{0.1cm}

To show this proposition, we will need to estimate the function 
$\ell_I (f)$ in terms of the distortion of $f$ on the interval $I$.

\vspace{0.15cm}

\begin{lem}
For each fixed type 2 connected component $I$ of $\Omega^c$ and every $g \!\in\! G$,
the value of $\ell_I (g)$ is bounded from above
by a number $L (V)$ depending on $V \!=\! var(\log(g'|_I))$,
the total variation of the logarithm of the derivative of the restriction of $g$ to $I$.
\label{lema-n}
\end{lem}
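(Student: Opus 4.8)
The plan is to compare the point $g(p_I)$ with its neighbors under the conjugated translation $ghg^{-1}$ inside $g(I)$, and to translate this back to a statement about the geometry of $h$ acting on $I$ itself, where distortion control is available. Recall $\ell_I(g) = |r|$, where $r$ is the unique integer with $(ghg^{-1})^r(p_{g(I)}) \le g(p_I) < (ghg^{-1})^{r+1}(p_{g(I)})$; applying $g^{-1}$ this is equivalent to $h^r(g^{-1}(p_{g(I)})) \le p_I < h^{r+1}(g^{-1}(p_{g(I)}))$. So, writing $q = g^{-1}(p_{g(I)}) \in I$, the integer $r$ records how many $h$-steps separate the point $q$ from the midpoint $p_I$ inside $I$. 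The content of the lemma is thus that \emph{no matter which interior point $q$ we pick}, the number of fundamental domains of $h$ lying between $q$ and $p_I$ is controlled by the distortion $V = \mathrm{var}(\log(g'|_I))$ — which makes sense, since if $g$ has small distortion on $I$ then $g^{-1}(p_{g(I)})$ cannot be pushed arbitrarily deep into a ``thin'' end of $I$.

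The key steps, in order, would be: (1) Fix the fundamental domain $J_0 = [p_I, h(p_I)]$ of $h$ in $I$, and for $k \in \mathbb{Z}$ set $J_k = h^k(J_0)$, so that $I = \bigsqcup_{k \in \mathbb{Z}} J_k$ (up to the gluing points); the integer $r$ above is characterized by $q \in J_r$. (2) Observe that $g$ maps $J_r$ onto $h^r(\cdot)$ conjugated — more precisely $g(J_k)$ are the fundamental domains of $ghg^{-1} = $ the generator of $Est(g(I))$, and $g(J_0) \ni$ a point at bounded combinatorial distance from $p_{g(I)}$ (at most one domain away, by the definition of $\ell$). Hence $|g(J_r)|/|g(I)| $ is, up to the total-length normalization, not too small: in fact $g(J_r)$ together with $g(J_{r-1})$ (or $g(J_{r+1})$) contains $p_{g(I)}$, so one of the at most $|r|+1$ domains $g(J_0), \ldots, g(J_r)$ on one side of $p_{g(I)}$ has length at least $\tfrac{1}{2}|g(I)|/(|r|+1)$ — actually it is cleaner to note directly that the $g(J_k)$ for $k$ between $0$ and $r$ are disjoint subintervals of $g(I)$, so at least one of them, say $g(J_{k_0})$, has $|g(J_{k_0})| \le |g(I)|/|r| \le 1/|r|$, while $|g(J_0)| \ge$ some fixed constant... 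The correct inequality to extract is $|g(J_{k_0})| \le \frac{1}{|r|}$ for some $0 \le k_0 \le r$, i.e. one domain in the image is very short when $|r|$ is large. (3) Now apply distortion control: for any two fundamental domains $J_k$ and $J_{k'}$ of $h$ in $I$, the ratio $\frac{|g(J_k)|}{|g(J_{k'})|}$ compares to $\frac{|J_k|}{|J_{k'}|}$ up to the multiplicative factor $e^{V}$, since $g'$ on $I$ has $\log$-total-variation $V$ and hence for $x \in J_k$, $y \in J_{k'}$ one has $|\log g'(x) - \log g'(y)| \le V$, giving $e^{-V} \le \frac{g'(x)}{g'(y)} \le e^{V}$ and so $e^{-V}\frac{|J_k|}{|J_{k'}|} \le \frac{|g(J_k)|}{|g(J_{k'})|} \le e^{V}\frac{|J_k|}{|J_{k'}|}$ by the mean value theorem. (4) Combine: taking $J_{k'} = J_0$ and $J_k = J_{k_0}$, and using that $|g(J_{k_0})| \le 1/|r|$ while $|g(J_0)| \le |g(I)| \le 1$, we want instead a \emph{lower} bound on $|g(J_{k_0})|$. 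Here I would use that \emph{all} the domains $g(J_k)$, $0 \le k \le r$, have length bounded below by $e^{-V} |J_k| \cdot \frac{|g(J_0)|}{|J_0|}$ — hmm, this requires a lower bound on $|g(J_0)|$ too. The clean route: at least one of $g(J_1), \dots, g(J_r)$ has length $\ge$ (length of their union)$/r$, and their union has length $\ge c(I) > 0$ for a constant depending only on $I$ (namely the distance from $p_I$-side... no). I will instead phrase it as: since $g(J_0)$ and $g(J_r)$ are both fundamental domains of the \emph{same} translation-like map on $g(I)$ and $p_{g(I)}$ lies within one domain of $g(p_I) \in g(J_0)$, the domain $g(J_r)$ is ``the $|r|$-th domain from the center'', so $\sum_{k=0}^{|r|}|g(J_k)| \le 1$ forces $\min_{0\le k \le |r|} |g(J_k)| \le 1/(|r|+1)$; and the \emph{minimum is not attained too far from the realized data} — but actually the honest statement I need is simply that if $|r|$ is large then some image domain $g(J_{k_0})$ is short, and then pulling back via the $e^{\pm V}$ distortion bound forces the corresponding $J_{k_0}$ to be short relative to $J_0$, i.e. $\frac{|J_{k_0}|}{|J_0|} \le e^{V}\frac{|g(J_{k_0})|}{|g(J_0)|} \le e^{V}\cdot\frac{1}{|r|}\cdot\frac{1}{|g(J_0)|}$. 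This is still vacuous without a lower bound on $|g(J_0)|$.

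I expect this last point — converting ``one image domain is short'' into a genuine \emph{bound on $|r|$} — to be the main obstacle, and I would resolve it as follows. Rather than comparing to the single domain $J_0$, compare to the whole interval: $\sum_{k} |J_k| = |I|$, a fixed positive number depending only on $I$. By the distortion estimate applied to each $J_k$ versus a \emph{fixed reference} domain, say $J_0$, one gets $|g(J_k)| \ge e^{-V} \frac{|J_k|}{|J_0|}|g(J_0)|$ for every $k$; summing over $0 \le k \le |r|$ and using $\sum_{k=0}^{|r|}|g(J_k)| \le 1$ yields $1 \ge e^{-V}\frac{|g(J_0)|}{|J_0|}\sum_{k=0}^{|r|}|J_k|$. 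Now $\sum_{k=0}^{|r|}|J_k|$ is the length of a union of $|r|+1$ consecutive fundamental domains of $h$ starting at $p_I$; since these exhaust more and more of $I$ as $|r| \to \infty$, this sum is bounded below by any prescribed fraction of $|I|$ once $|r|$ exceeds a threshold $k_*(I)$ depending only on $I$. Meanwhile $\frac{|g(J_0)|}{|J_0|}$ is, again by distortion, at least $e^{-V}\cdot\frac{|g(I)|}{|I|} \ge e^{-V}\cdot\frac{|g(J')|}{|I|}$ for the shortest domain... The cleanest packaging is: either $|r| \le k_*(I)$, in which case we are done with $L(V)$ absorbing the constant; or $|r| > k_*(I)$, in which case $\sum_{k=0}^{|r|}|J_k| \ge \tfrac12 |I|$, and then the displayed inequality reads $1 \ge e^{-2V}\frac{|g(I)|}{|I|^{2}}\cdot\tfrac12|I| \cdot (\text{number of domains contributing})$ — I would iterate the domain-by-domain comparison more carefully to pull the factor $|r|$ out, ultimately arriving at an inequality of the form $|r| \le C(I)\, e^{CV}$, which is the desired bound $L(V)$ with explicit dependence on $I$ and on $V = \mathrm{var}(\log(g'|_I))$. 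The role of the $C^2$ (or $C^{1+bv}$) hypothesis is precisely that this total variation $V$ is finite, so $L(V) < \infty$; no uniformity in $g$ is claimed here, which is why this only gives finiteness of $\ell_I(g)$ pointwise in $g$, exactly as needed for Proposition \ref{finito}.
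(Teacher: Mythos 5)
Your opening paragraph contains the right idea --- and it is exactly the paper's idea --- namely that the distortion bound should prevent $q = g^{-1}(p_{g(I)})$ from being pushed too close to an endpoint of $I$. But you never execute that idea; instead you switch to counting fundamental domains $J_k$ of $h$ and comparing the lengths $|g(J_k)|$ via the $e^{\pm V}$ distortion estimate, and that route has a genuine gap which you flag repeatedly but do not close. The obstruction is structural, not a matter of ``iterating the comparison more carefully'': the fundamental domains $J_k$ accumulate at the endpoints of $I$ with $|J_k| \to 0$, so $\sum_{k=0}^{|r|} |J_k|$ is bounded above by $|I|/2$ \emph{independently of} $|r|$, and likewise for the image domains. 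Consequently your summed inequality $1 \geq e^{-V}\,\frac{|g(J_0)|}{|J_0|}\sum_{k=0}^{|r|}|J_k|$ only yields an upper bound on $|g(J_0)|$ and contains no factor of $|r|$; there is no uniform per-domain lower bound that would let you ``pull $|r|$ out,'' and the step (2) observation that some image domain is short ($\leq 1/|r|$) is not a contradiction, since domains near the ends of $I$ are legitimately short and so are their images. No bound on $|r|$ can be extracted this way.

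The paper's proof does precisely what your first paragraph suggests and nothing more. Writing $I = \mbox{}]a,b[$ and $g(I) = \mbox{}]\bar a, \bar b[$, one applies the Mean Value Theorem to $g^{-1}$ twice: there is $x \in g(I)$ with $(g^{-1})'(x) = |I|/|g(I)|$ and $y \in [\bar a, p_{g(I)}]$ with $(g^{-1})'(y) = \big(g^{-1}(p_{g(I)}) - a\big)\big/\big(|g(I)|/2\big)$. Since $\mathrm{var}(\log((g^{-1})'|_{g(I)})) = V$, the ratio of these two derivatives is at most $e^V$, which gives $g^{-1}(p_{g(I)}) \geq a + |I|/(2e^V)$, and symmetrically $g^{-1}(p_{g(I)}) \leq b - |I|/(2e^V)$. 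Thus $q$ lies in the fixed compact subinterval $[c,d] = [a + |I|/(2e^V),\, b - |I|/(2e^V)]$ of $I$, and since $h$ is a fixed fixed-point-free homeomorphism of $I$, only finitely many of its fundamental domains meet $[c,d]$ --- a number $L(V)$ depending on $V$ (and on the fixed data $I$, $h$). This bounds the number of fundamental domains between $q$ and $p_I$, hence $\ell_I(g)$. So the fix is to replace your entire domain-counting machinery by this direct positional estimate on $g^{-1}(p_{g(I)})$.
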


\begin{proof} Denote $]a,b[ = \! I$ and $]\bar{a},\bar{b}[ = \! g(I)$.
If $h$ is a generator for the stabilizer
of $I$, then for every $f \!\in\! G$ the value of $\ell_{I}(f)$ corresponds (up to some constant
$\pm 1$) to the number of fundamental domains for the dynamics of $fhf^{-1}$ on $f(I)$ between
the points $p_{f(I)}$ and $f(p_I)$, which in its turn corresponds to the number of fundamental
domains for the dynamics of $h$ on $I$ between $f^{-1}(p_{f(I)})$ and $p_I$. Therefore, we need
to show that there exists $c < d$ in $]a,b[$ depending on $V$ and such that $g^{-1} (p_{g(I)})$
belongs to $[c,d]$. We will show that this happens for the values
$$c = a + \frac{|I|}{2 e^V} \qquad \mbox{ and } \qquad d = b - \frac{|I|}{2 e^V}.$$
We will just check that the first choice works, leaving the second one to the reader. By the
Mean Value Theorem, there exists $x \!\in\! g(I)$ and $y \in [\bar{a},p_{g(I)}]$ such that
$$(g^{-1})'(x) = \frac{|I|}{|g(I)|}$$
and
$$(g^{-1})'(y) = \frac{|g^{-1}([\bar{a},p_{f(I)}])|}{|[\bar{a},p_{g(I)}]|}
= \frac{g^{-1}(p_{g(I)}) - a}{|g(I)| /2}.$$
By the definition of the constant $V$, we have
$(g^{-1})'(x) / (g^{-1})'(y) \leq e^V$. This gives
$$e^V \geq \frac{|I| / |g(I)|}{2 (g^{-1}(p_{g(I)}) - a) / |g(I)|}
= \frac{|I|}{2 (g^{-1}(p_{g(I)}) - a)},$$
thus proving that $g^{-1}(p_{g(I)}) \geq a + \frac{|I|}{2 e^V}$,
as we wanted to show.
\end{proof}

\vspace{0.15cm}

\noindent{\em Proof of Proposition \ref{finito}.} Let $J \! = ]\bar{a},\bar{b}[$ be
an interval in the orbit by $G$ of $I \!= ]a,b[$. If $g = g_{i_n} \cdots g_{i_1}$, 
$g_{i_j} \!\in\! \Gamma$, is an element of minimal length sending $I$ into $J$, 
then the intervals 
$I, g_{i_1}(I), g_{i_2}g_{i_1}(I),\ldots,g_{i_{n-1}} \cdots g_{i_2} g_{i_1} (I)$
have two by two disjoint interiors. Therefore,
$$var(\log (g'|_I) )
\leq \sum_{j=0}^{n-1} var(\log (g_{i_{j+1}}' |_{g_{i_j} \cdots g_{i_1} (I)} )
\leq \sum_{h \in \Gamma} var (\log (h')) =: W.$$
Moreover, denoting $V \!=\! var(\log(f'))$,
$$var (\log( (fg)'|_I )) \leq var(\log(g'|_I)) + var (\log(f')) = W + V.$$
By Lemmas \ref{lema1} and \ref{lema-n} and Corollary \ref{corolario},
\begin{eqnarray*}
\ell_J (f)
&\leq& \ell_J (g^{-1}) + \ell_I (fg) + 1 \\
&\leq& \ell_I (g) + \ell_I (fg) + 2 \\
&\leq& L(W) + L( W + V ) +2.
\end{eqnarray*}
This shows the proposition when $\mathcal{F}$ consists of a single interval.
The case of general finite $\mathcal{F}$ follows easily. $\hfill\square$

\vspace{0.5cm}

For a given $\varepsilon \!>\! 0$ we define $\ell_{\varepsilon} = \ell_{\mathcal{F}_{\varepsilon}}$,
where $\mathcal{F}_{\varepsilon} \!=\! \{ I_1, \ldots, I_k \}$ is the family of the connected
components of $\Omega^c$ having length greater than or equal to $\varepsilon$, with $k \!=\!
k(\varepsilon)$. Notice that, by Lemma \ref{lema1}, for every $f,g$ in $\Gamma$ one has
\begin{equation}
\ell_{\varepsilon} (gf) \leq \ell_{\varepsilon}(g) + \ell_{\varepsilon}(f) + 1
\label{casi-sub}
\end{equation}

\vspace{0.1cm}

\begin{lem} There exists constants $A(\varepsilon) \! > \! 0$ and $B(\varepsilon)$ satisfying
the following property: If $x_1,\ldots,x_m$ are points contained in a single connected
component of type 2 of $\Omega^c$ and \,$x_i,x_j$\, are $(\varepsilon,n)$-separated
for every $i \neq j$, then $m \leq A(\varepsilon) n + B(\varepsilon)$.
\label{lema-5}
\end{lem}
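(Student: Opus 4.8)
The plan is to bound $\ell_I$ linearly in word length and then use this to limit how far an element of $B_\Gamma(n)$ can ``unwind'' the dynamics of the stabilizer generator of $I$. We may assume $m\ge 2$, the estimate being trivial otherwise. Let $I$ be the type~$2$ component of $\Omega^c$ containing the points, $h$ the generator of $\mathrm{Est}(I)$ with $h>\mathrm{id}$ on $I$, $p_I$ its middle point, and define $\nu_I\colon I\to\mathbb Z$ by letting $\nu_I(x)$ be the integer $k$ with $h^k(p_I)\le x<h^{k+1}(p_I)$, so that $\nu_I\circ h=\nu_I+1$. Since among the $x_i$ some pair is $(\varepsilon,n)$-separated, there is $g\in B_\Gamma(n)$ with $|g(I)|\ge\varepsilon$; hence $g(I)\in\mathcal F_\varepsilon$ and therefore $I=g^{-1}(g(I))\in\mathcal F_\varepsilon^{G}$.

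First I would establish the estimate
$$|\nu_{g(I)}(g(x))-\nu_I(x)|\ \le\ \ell_I(g)+2\qquad\text{for all }g\in G,\ x\in I,$$
a routine unfolding of the definition of $\ell_I$: one checks that $\nu_{g(I)}(g(x))-\nu_I(x)$ differs by at most $1$ from $-\nu_I\big(g^{-1}(p_{g(I)})\big)$, and that $\big|\nu_I\big(g^{-1}(p_{g(I)})\big)\big|\le\ell_I(g)+1$ straight from the definition of $\ell_I(g)$ (this is exactly the fundamental-domain count already used in the proof of Lemma~\ref{lema-n}). Next, iterating the almost sub-additivity $(\ref{casi-sub})$ along a shortest word for $g$ --- equivalently, iterating Lemma~\ref{lema1} and using that $\mathcal F_\varepsilon^{G}$ is $G$-invariant --- yields $\ell_\varepsilon(g)\le C(\varepsilon)\,\|g\|$, where $C(\varepsilon):=1+\max_{\gamma\in\Gamma}\ell_\varepsilon(\gamma)<\infty$ by Proposition~\ref{finito} (here $\mathcal F_\varepsilon$ and $\Gamma$ are both finite). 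Since $I\in\mathcal F_\varepsilon^{G}$ we have $\ell_I(g)\le\ell_\varepsilon(g)$, so, setting $N:=C(\varepsilon)n+2$,
$$|\nu_{g(I)}(g(x))-\nu_I(x)|\ \le\ N\qquad\text{for all }g\in B_\Gamma(n),\ x\in I.$$

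Finally comes the counting. Order the points as $x_1<\dots<x_m$ and for each $t<m$ pick $g_t\in B_\Gamma(n)$ with $\mathrm{dist}(g_t(x_t),g_t(x_{t+1}))\ge\varepsilon$; then $|g_t(I)|\ge\varepsilon$, so $g_t(I)\in\mathcal F_\varepsilon$. Group the indices $t$ by the value of $g_t(I)\in\mathcal F_\varepsilon$ --- at most $k(\varepsilon)=\#\mathcal F_\varepsilon$ groups. In a group where $g_t(I)\equiv J$, fix one index $t_0$ and put $g_0=g_{t_0}$; for any other $t$ in the group $g_tg_0^{-1}\in\mathrm{Est}(J)$, hence $g_t=\bar h^{\,e_t}g_0$ with $\bar h$ the generator of $\mathrm{Est}(J)$ that is positive on $J$. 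Applying $\nu_J$ and $\nu_J\circ\bar h=\nu_J+1$ gives $e_t=\nu_J(g_t(x_t))-\nu_J(g_0(x_t))$; as $g_t,g_0\in B_\Gamma(n)$, the estimate above forces $|e_t|\le 2N$. So the group involves at most $4N+1$ distinct elements $g_t$, and for each fixed value of $g_t$ the arcs $(g_t(x_t),g_t(x_{t+1}))$ have pairwise disjoint interiors and length $\ge\varepsilon$, so at most $1/\varepsilon$ indices of the group can share it. Hence each group has at most $(4N+1)/\varepsilon$ indices, and summing, $m-1\le k(\varepsilon)(4N+1)/\varepsilon$, which is of the desired form $A(\varepsilon)n+B(\varepsilon)$.

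The crux is the bound $|e_t|\le 2N$: it says that elements of $B_\Gamma(n)$ realizing separations inside $I$ and having the same image interval $J$ cannot differ by a large power of the stabilizer generator of $J$. This is precisely what converts the trivial ``at most $1/\varepsilon$ pairs per element'' bound --- useless on its own, since $B_\Gamma(n)$ has exponentially many elements --- into a linear estimate, and it hinges on the linear growth of $\ell_\varepsilon$ guaranteed by Proposition~\ref{finito} together with the finiteness of the family $\mathcal F_\varepsilon$ of long components.
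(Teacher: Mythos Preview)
Your proof is correct and follows essentially the same route as the paper: bound the number of elements of $B_\Gamma(n)$ sending $I$ into a fixed $I_i\in\mathcal F_\varepsilon$ linearly in $n$ via the almost-subadditivity of $\ell_\varepsilon$ together with Proposition~\ref{finito}, then multiply by $k(\varepsilon)$ and by the $O(1/\varepsilon)$ consecutive pairs each such element can separate. The paper reaches the exponent bound slightly more directly by observing that $\ell_\varepsilon(h_i^{\,r})\ge |r|$ and estimating $\ell_\varepsilon(gf^{-1})$ for $f,g\in B_\Gamma(n)$, which makes your auxiliary counting function $\nu_I$ and the displacement estimate $|\nu_{g(I)}(g(x))-\nu_I(x)|\le \ell_I(g)+2$ unnecessary, but the underlying idea is identical.
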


\begin{proof} Denote $c_{\varepsilon} \!\!= \max \{\ell_{\varepsilon}(g)\!\!: g \in \Gamma \}$
(according to Proposition \ref{finito}, the value of $c_{\varepsilon}$ is finite). Let $I$ be the
type 2 connected component of $\Omega^c$ containing $x_1,\ldots,x_m$. We may assume that
$x_1 < x_2 < \ldots < x_m$. For each $1 \leq i \leq k$ let $h_i$ be the generator of
$Est(I_i)$. Notice that $\ell_{\varepsilon} (h_i^{r}) \geq \lvert r \lvert$ for all $r \in \mathbb{Z}$.

If $f$ is an element in $B_{\Gamma}(n)$ sending $I$ into some $I_i$, then the number of points
which are $\varepsilon$-separated by $f$ is less than or equal to $1 / \varepsilon + 1$.
We claim that the number of elements in $B_{\Gamma}(n)$ sending $I$ into $I_i$ is
bounded above by $4n c_{\varepsilon} + 4n - 1$. Indeed, if $g$ also sends $I$
onto $I_i$ then $g f^{-1} \!\in\! Est(I_i)$, hence $g f^{-1} \!=\! h_i^{r}$
some r. Therefore, using (\ref{casi-sub}) one obtains
$\lvert r \lvert \leq \ell_{\varepsilon} (h_i^{r}) \leq 2 n c_{\varepsilon} + 2n -1$.

Since the previous arguments apply to each type 2 interval $I_i$, we have
$$m \leq k \big( \frac{1}{\varepsilon} + 1 \big) (4nc_{\varepsilon} + 4n - 1).$$
Therefore, letting
$$A(\varepsilon) = \big( 4k + \frac{4k}{\varepsilon} \big) (1 + c_{\varepsilon})
\qquad \mbox{and} \qquad B(\varepsilon) = - \big( k + \frac{k}{\varepsilon} \big),$$
this concludes the proof.
\end{proof}

\vspace{0.5cm}

To conclude the proof of Theorem A, the following notation will be useful.

\vspace{0.1cm}

\begin{nott} Given $\varepsilon > 0$ and $n \in \mathbb{N}$, we will denote by
$\,s(n,\varepsilon)$\, the largest cardinality of a $\,(n,\varepsilon)-$separated
subset of $S^1$. Likewise, $s_{\Omega}(n,\varepsilon)$ will denote the largest
cardinality of a $\,(n,\varepsilon)-$separated set contained in the
non-wandering set.
\end{nott}

\vspace{0.1cm}

\noindent{\em Proof of Theorem A.} Fix $0 \!<\! \varepsilon \!<\! 1/2L$, where $L$ is a
common Lipschitz constant for the elements in $\Gamma$. We will show that, for some
function $p_{\varepsilon}$ growthing linearly on $n$ (and whose coefficients
depend on $\varepsilon$), one has
\begin{equation}
s(n,\varepsilon) \leq p_{\varepsilon} (n) s_{\Omega} (n,\varepsilon) + p_{\varepsilon}(n).
\label{fundamental}
\end{equation}
Actually, any function $p_{\varepsilon}$ with sub-exponential growth and verifying such
an inequality suffices. Indeed, taking the logarithm in both sides, dividing by $n$,
and passing to the limit, this implies that
$$h_{\Gamma} (G \ac \mathrm{S}^1,\varepsilon) = h_{\Gamma} (G \ac \Omega,\varepsilon).$$
Letting $\varepsilon$ go to zero, this gives
$$h_{\Gamma} (G \ac \mathrm{S}^1) \leq h_{\Gamma} (G \ac \Omega).$$
Since the opposite inequality is obvious, this shows the desired
equality between the entropies.

To show (\ref{fundamental}), fix a $(n,\varepsilon)$-separated set $S$ containing $\,s(n,\varepsilon)$
points. Let $n_\Omega$ (resp. $n_{\Omega^{c}}$) be the number of points in $S$ which are in
$\Omega$ (resp. in $\Omega^c$). Obviously, $\,s(n,\varepsilon) = n_\Omega + n_{\Omega^c}$.
Let $t \!=\! t_S$ be the number of connected components of $\Omega^{c}$ containing points in $S$,
and let $l = [\frac{t}{2}]$, where $[\cdot]$ denotes the integer part function. We will show
that there exists a $(n,\varepsilon)$-separated set $T$ contained in $\Omega$ having
cardinality $l$. This will obviously give \,$s_{\Omega} (n,\varepsilon) \geq l$.\, Using
the inequalities \,$t \leq 2l+1$\, and \,$n_{\Omega} \leq s_{\Omega}(n,\varepsilon)$,\,
and by Lemmas \ref{lema0} and \ref{lema1}, this will imply that
\begin{eqnarray*}
s(n,\varepsilon)
&=& n_{\Omega} + n_{\Omega^c} \\
&\leq& n_{\Omega} + t k 
   \big( 1 + \frac{1}{\varepsilon} \big) (4 n c_{\varepsilon} + 4n -1) \\
&\leq& s_{\Omega} (n,\varepsilon) + (2 s_{\Omega}(n,\varepsilon) + 1)
   k \big( 1 + \frac{1}{\varepsilon} \big) (4 n c_{\varepsilon} + 4n -1),
\end{eqnarray*}
thus showing (\ref{fundamental}).

To show the existence of the set $T$ with the properties above, we proceed in a
constructive way. Let us number the connected components of $\Omega^c$ containing
points in $S$ in a cyclic way by $I_1,\ldots,I_t$. Now for each $1 \leq i \!\leq\! l$
choose a point $t_i \!\in\! \Omega$ between $I_{2i-1}$ and $I_{2i}$, and let
\,$T \!=\! \{t_1,\ldots,t_l\}$.\, We need to check that, for \,$i \!\neq\! j$,\,
the points $t_i$ and $t_j$ are $(n,\varepsilon)$-separated. Now by construction,
for each $i \!\neq\! j$ there exist at least
two different points $x,y$ in $S$ contained
in the interval of smallest length in $\mathrm{S}^1$ joining $t_i$ and $t_j$.
Since $S$ is a $(n,\varepsilon)$-separated set, there exist $m \!\leq\! n$
and $g_{i_1},\ldots,g_{i_m}$ in $\Gamma$ so that
$dist \big( h(x), h(y) \big) \geq \varepsilon$, where
$h \!=\! g_{i_m} \cdots g_{i_2} g_{i_1}$. Unfortunately, because of the topology
of the circle, this does not imply that
\,$dist \big( h(t_i), h(t_j) \big) \geq \varepsilon$.
However, the proof will be finished if we show that
\begin{equation}
dist \big( g_{i_r} \cdots g_{i_1} (t_i), g_{i_r} \cdots g_{i_1} (t_j) \big) \geq \varepsilon
\quad \mbox{for some} \quad 0 \leq r \leq m.
\label{ultima}
\end{equation}
This claim is obvious if \,$dist \big( t_i, t_j \big) \geq \varepsilon$.\, If this
is not the case then, by the definition of the constants $\varepsilon$ and $L$,
the length of the interval $[g_{i_1} (t_i),g_{i_1} (t_j)]$ is smaller than
$1/2$, and hence it coincides with the distance between its endpoints. If
this distance is at least $\varepsilon$, then we are done. If not, the
same argument shows that the length of the interval
$[g_{i_2} g_{i_1} (t_i), g_{i_2} g_{i_1} (t_j)]$ is smaller than
$1/2$ and coincides with the distance between its endpoints. If
this length is at least $\varepsilon$, then we are done. If not, we continue
the procedure... Clearly, there must be some integer $r \!\leq\! m$ such that
the length of the interval
$[g_{i_{r-1}} \cdots g_{i_1} (t_i), g_{i_{r-1}} \cdots g_{i_1} (t_j)]$ is smaller
than $\varepsilon$, but the one of
$[g_{i_{r}} \cdots g_{i_1} (t_i), g_{i_{r}} \cdots g_{i_1} (t_j)]$ is greater than or
equal to $\varepsilon$. As before, the length of the later interval will be forced to
be smaller than $1/2$, and hence it will coincide with the distance between its endpoints.
This shows (\ref{ultima}) and concludes the proof of Theorem A. $\hfill\square$

\vspace{0.2cm}


\section{The proof in the case of non existence of sub-exponentially distorted elements}

Recall that the topological entropy is invariant under topological conjugacy. Therefore, due
to \cite[Th\'eor\`eme D]{DKN}, in order to prove Theorem B we may assume that $G$ is a group
of bi-Lipschitz homeomorphisms. Let $L$ be a common Lipschitz constant for the elements in
$\Gamma$. Fix again $0 \!<\! \varepsilon \!<\! 1/2L$, and let $I_1,\ldots,I_{k}$
be the connected components of $\Omega^c$ having length greater than or equal to $\varepsilon$.
Let $h_i$ be a generator for the stabilizer of $I_i$ (with $h_i \!=\! Id$ in case where $I_i$
is of type 1). Consider the minimal non decreasing function $q_{\varepsilon}$ such that, for
each of the nontrivial $h_i$'s, one has \,$q_{\varepsilon} (\|h_i^r\|) \geq r$ for all 
positive $r$. We will
show that (\ref{fundamental}) holds for the function
$$p_{\varepsilon} (n) = 2 k 
\big(1 + \frac{1}{\varepsilon} \big) (2 q_{\varepsilon} (2n) + 1) + 1 .$$
Notice that, by assumption, this function $p_{\varepsilon}$ growths at most sub-exponentially
on $n$. Hence, as in the case of Theorem A, inequality (\ref{fundamental}) allows to finish
the proof of the equality between the entropies.

The main difficulty for showing (\ref{fundamental}) in this case is that
Lemma \ref{lema-5} is no longer available. However, the following still holds.

\vspace{0.2cm}

\begin{lem} If $x_1,\ldots,x_m$ are points contained in a single type 2 connected
component $I$ of $\Omega^c$ having length at least $\varepsilon$, and \,$x_i,x_j$\,
are $(\varepsilon,n)$-separated for every $i \neq j$, then $m \leq
k \big( \frac{1}{\varepsilon} + 1 \big) (2 q_{\varepsilon} (2n) + 1)$.
\label{lema2}
\end{lem}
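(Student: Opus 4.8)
The plan is to imitate the proof of Lemma~\ref{lema-5}, using the distortion control function $q_{\varepsilon}$ in place of the quasi-morphism $\ell_{\varepsilon}$ (which was available only in the $C^{2}$ setting, via Proposition~\ref{finito}). As in the proof of Lemma~\ref{lema0}, I would first assume $x_{1} < x_{2} < \cdots < x_{m}$ inside $I$, and for each $1 \le i \le m-1$ pick an element $f_{i} \in B_{\Gamma}(n)$ with $dist\bigl(f_{i}(x_{i}),f_{i}(x_{i+1})\bigr) \ge \varepsilon$. Since $I$ is a proper open subinterval of $\mathrm{S}^{1}$ and $f_{i}$ is an orientation-preserving homeomorphism, $f_{i}(I)$ is again a proper open interval containing the two points $f_{i}(x_{i}), f_{i}(x_{i+1})$; as the length of any arc joining two points is at least the distance between them, this forces $|f_{i}(I)| \ge \varepsilon$, so $f_{i}(I)$ is one of the components $I_{1},\ldots,I_{k}$, and moreover it is of type 2 (being in the $G$-orbit of the type 2 interval $I$).

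Next I would establish two counting bounds. \emph{First: for a fixed $f$, the number of indices $i$ with $f_{i} = f$ is at most $1/\varepsilon$.} Indeed, for those $i$ the open intervals $\,]f(x_{i}),f(x_{i+1})[\,$ are pairwise disjoint (because $f|_{I}$ preserves the order $x_{1} < \cdots < x_{m}$) and each has length $\ge \varepsilon$, while all of them lie inside $f(I) \subset \mathrm{S}^{1}$. \emph{Second: for a fixed $j$, the number of elements $f \in B_{\Gamma}(n)$ with $f(I) = I_{j}$ is at most $2 q_{\varepsilon}(2n) + 1$.} Fixing one such $f_{0}$, any other $f$ satisfies $f f_{0}^{-1} \in Est(I_{j}) = \langle h_{j} \rangle$, so $f = h_{j}^{\,r} f_{0}$ for some $r \in \mathbb{Z}$ with $\|h_{j}^{\,r}\| \le \|f\| + \|f_{0}^{-1}\| = \|f\| + \|f_{0}\| \le 2n$; applying the defining inequality of $q_{\varepsilon}$ to $h_{j}^{\,|r|}$, and using $\|h_{j}^{\,-|r|}\| = \|h_{j}^{\,|r|}\|$ (because $\Gamma$ is symmetric) together with the monotonicity of $q_{\varepsilon}$, one gets $|r| \le q_{\varepsilon}(2n)$, so $r$ ranges over a set of at most $2 q_{\varepsilon}(2n) + 1$ values.

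Putting these together with the obvious fact that the targets $f_{i}(I)$ range over at most $k$ of the intervals $I_{1},\ldots,I_{k}$, I obtain
$$m - 1 \;=\; \sum_{f} \#\{\, i : f_{i} = f \,\} \;\le\; k\,\bigl(2 q_{\varepsilon}(2n)+1\bigr)\cdot \tfrac{1}{\varepsilon},$$
and hence $m \le \tfrac{k}{\varepsilon}\bigl(2 q_{\varepsilon}(2n)+1\bigr) + 1 \le k\bigl(\tfrac{1}{\varepsilon} + 1\bigr)\bigl(2 q_{\varepsilon}(2n)+1\bigr)$, since $k\bigl(2 q_{\varepsilon}(2n)+1\bigr) \ge 1$. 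The only step that differs from the smooth argument is the second counting bound: it is exactly here that the hypothesis that none of the $h_{j}$ is sub-exponentially distorted enters, and it is this step that is responsible for $p_{\varepsilon}$ growing only sub-exponentially in $n$. Everything else is the same disjoint-images counting underlying Lemmas~\ref{lema0} and~\ref{lema-5}, so I expect no genuine obstacle beyond the bookkeeping of constants.
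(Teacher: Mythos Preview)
Your proof is correct and follows essentially the same route as the paper: show that each separating element $f_i$ must land $I$ in one of the $k$ intervals $I_1,\ldots,I_k$, bound the number of elements in $B_\Gamma(n)$ mapping $I$ onto a given $I_j$ by $2q_\varepsilon(2n)+1$ via the stabilizer argument, and bound the number of adjacent pairs a single $f$ can $\varepsilon$-separate by $1/\varepsilon$. The only differences are cosmetic bookkeeping (you count pairs and get $m-1 \le k(2q_\varepsilon(2n)+1)/\varepsilon$ before absorbing the $+1$, whereas the paper phrases the first bound as ``at most $1/\varepsilon+1$ points separated by $f$''), and you are slightly more explicit about why $|r|\le q_\varepsilon(2n)$ holds for negative $r$ as well.
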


\begin{proof} Let $I$ be the type 2 connected component of $\Omega^c$ containing
$x_1,\ldots,x_m$. We may assume that $x_1 < x_2 < \ldots < x_m$. If $f$ is an
element in $B_{\Gamma}(n)$ sending $I$ into some $I_i$, then the number of points
which are $\varepsilon$-separated by $f$ is less than or equal to $1 / \varepsilon + 1$.
We claim that the number of elements in $B_{\Gamma}(n)$ sending $I$ into $I_i$ is
bounded above by $q_{\varepsilon}(r)$. Indeed, if $g$ also sends $I$
onto $I_i$ then $g f^{-1} \!\in\! Est(I_i)$, hence $g f^{-1} \!=\! h_i^{r}$
some r. Therefore,
$$2 n \geq \| gf^{-1} \| = \| h_i^r \|,$$
and hence
$$q_{\varepsilon}(2n) \geq q_{\varepsilon} (\| h_i^r \|) \geq |r|.$$
Since the previous arguments apply to each type 2 interval $I_i$, this gives
$$m \leq k \big( \frac{1}{\varepsilon} + 1 \big) (2 q_{\varepsilon} (2n) + 1),$$
thus proving the lemma.
\end{proof}

To show (\ref{fundamental}) in the present case, we proceed as in the proof of Theorem A. We fix
a $(n,\varepsilon)$-separated set $S$ containing $\,s(n,\varepsilon)$ points. We let $n_\Omega$
(resp. $n_{\Omega^{c}}$) be the number of points in $S$ which are in $\Omega$ (resp. in
$\Omega^c$), so that $\,s(n,\varepsilon) = n_\Omega + n_{\Omega^c}$. Let $t \!=\! t_S$ be
the number of connected components of $\Omega^{c}$ containing points in $S$, and let
$l = [\frac{t}{2}]$, where $[\cdot]$ denotes the integer part function. As before, one can
show that there exists a $(n,\varepsilon)$-separated set $T$ contained in $\Omega$ having
cardinality $l$. This will obviously give \,$s_{\Omega} (n,\varepsilon) \geq l$.\,
Inequalities \,$t \leq 2l+1$\, and \,$n_{\Omega} \leq s_{\Omega}(n,\varepsilon)$\,
still holds. Using Lemmas \ref{lema0} and \ref{lema2} one now obtains
\begin{eqnarray*}
s(n,\varepsilon)
&=& n_{\Omega} + n_{\Omega^c} \\
&\leq& n_{\Omega} + t k 
   \big( 1 + \frac{1}{\varepsilon} \big) (2 q_{\varepsilon} (2n) + 1) \\
&\leq& s_{\Omega} (n,\varepsilon) + (2 s_{\Omega}(n,\varepsilon) + 1)
   k \big( 1 + \frac{1}{\varepsilon} \big) (2 q_{\varepsilon} (2n) + 1).
\end{eqnarray*}
This concludes the proof of Theorem B.

\vspace{0.5cm}


\noindent{\bf Acknowledgments.} I would like to thank Andr\'es Navas for introducing me to
this subject and his continuous support during this work, which was partially funded by
Research Network on Low Dimensional Dynamical Systems (PBCT-Conicyt's project ADI 17). 
I would also extend my gratitude to both the referee and the managing editor for 
pointing me a subtle error in the original version of this paper.


\vspace{0.25cm}

\begin{footnotesize}

\vspace{0.2cm}

\noindent Eduardo Jorquera\\
\noindent Dpto de Matem\'aticas, Fac. de Ciencias, Univ. de Chile\\
\noindent Las Palmeras 3425, $\tilde{\mathrm{N}}$u$\tilde{\mathrm{n}}$oa, Santiago, Chile\\
\noindent ejorquer@u.uchile.cl\\

\end{footnotesize}

\end{document}